\documentclass{amsart}
\usepackage{amsfonts,amssymb,amscd,amsmath,enumerate,verbatim,calc}
\usepackage[all]{xy}
\usepackage{stackrel}

\newcommand{\CM}{Cohen-Macaulay}

\newcommand{\Pb}{\mathbb{P} }

\newcommand{\m}{\mathfrak{m} }

\newcommand{\R}{\mathcal{R} }
\newcommand{\Z}{\mathbb{Z} }
\newcommand{\Sc}{\mathcal{S} }
\newcommand{\rt}{\rightarrow}

\newcommand{\ilim}{\varprojlim}

\DeclareMathOperator*{\limone}{lim^{\rlap{\scriptsize 1}}}

\newcommand{\coker}{\operatorname{coker}}

\newcommand{\depth}{\operatorname{depth}}

\newcommand{\projdim}{\operatorname{projdim}}
\newcommand{\Hom}{\operatorname{^*Hom}}
\newcommand{\Ext}{\operatorname{^*Ext}}
\newcommand{\Tor}{\operatorname{Tor}}

\theoremstyle{plain}

\newtheorem{theorem}{Theorem}[section]
\newtheorem{corollary}[theorem]{Corollary}

\theoremstyle{definition}

\newtheorem{s}[theorem]{}
\newtheorem{remark}[theorem]{Remark}

\theoremstyle{remark}

\begin{document}

\title[$\depth A/I^n$]{A lower bound on \textbf{depth} $\mathbf{A/I^n}$}
\author{Tony~J.~Puthenpurakal}
\date{\today}
\address{Department of Mathematics, Indian Institute of Technology Bombay, Powai, Mumbai 400 076, India}

\email{tputhen@gmail.com}
\subjclass{Primary 13A30; Secondary  13D07}
\keywords{Associated graded rings, Rees Algebras, analytic spread, inverse limits and $\ilim^1$}

 \begin{abstract}
Let $(A,\mathfrak{m})$ be a Noetherian local ring and let $I\subseteq \mathfrak{m}$ be an ideal in $A$. Let $\ell(I)$ be the analytic spread of $I$. Let $G_I(A) = \bigoplus_{n \geq 0}I^n/I^{n+1}$ be the associated graded ring of $I$ and let its depth be $t$.  We prove 
$\depth A/I^n \geq t - \ell(I)$ for all $n\geq 1$.
 \end{abstract}
 \maketitle
\section{introduction}
Let $(A,\mathfrak{m})$ be a Noetherian local ring of dimension $d$ and let $I\subseteq \mathfrak{m}$ be an ideal in $A$. Let $G_I(A) = \bigoplus_{n \geq 0}I^n/I^{n+1}$ be the associated graded ring of $I$ and let $\R(I) = A[It]$ be the Rees algebra of $I$. Let $F(I) = \bigoplus_{n\geq 0}I^n/\m I^n$ be the fiber-cone of $I$. Recall that $\ell(I) = \dim F(I)$ is called the analytic spread of $I$.
A basic result due to Burch, see \cite[Corollary, p.\ 373]{Bu} is the following inequality
\[
\ell(I) \leq d - \min_{n} \depth A/I^n.
\]
Brodmann proved that $\lim_{n \rt \infty} \depth A/I^n$ exists, see \cite[Theorem 2(1)]{Br}. Furthermore he improved Burch's result by proving
\[
\ell(I) \leq d - \lim_{n \rt \infty} \depth A/I^n,
\]
see \cite[Theorem 2(2)]{Br}.

\s We were motivated by a result of Eisenbud and Huneke, see \cite[3.3]{EH} which states that
if $ I$ is  an ideal of $A$ of
height at least one and if $G_I(A)$ is Cohen-Macaulay then equality holds in Burch's result, i.e.,
\[
\ell(I) =  d - \min_{n} \depth A/I^n.
\]
 Note this also implies equality in Brodmann's result. So $\lim_{n \rt \infty} \depth A/I^n = d - \ell(I)$ when $G_I(A)$ is \CM \ and height $I$ is atleast one.

 \s In general Brodmann's result gives an upper bound
 $$\depth A/I^n \leq d - \ell(I) \quad \text{for all} \ n \gg 0. $$
 The objective of this paper is to give lower bounds on $\depth A/I^n$. Inspired by Eisenbud and Huneke's result we relate it to $\depth G_I(A)$. We prove:
 \begin{theorem}
 \label{bella}Let $(A,\mathfrak{m})$ be a Noetherian local ring and let $I\subseteq \mathfrak{m}$ be an ideal in $A$.  Let $\depth G_I(A) = t$.
 Then $\depth A/I^n \geq t - \ell(I)$ for all $n\geq 1$.
 \end{theorem}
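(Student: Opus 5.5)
The plan is to produce a regular sequence of length $t-\ell(I)$ consisting of elements of degree zero. First I will reduce to the case of an infinite residue field by the usual base change $A \to A[X]_{\m A[X]}$. This change preserves $\depth A/I^n$, $\depth G_I(A)$ and $\ell(I)$, since the extension is faithfully flat with closed fibre a field, and it commutes with forming $G_I(A)$ and $F(I)$.

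Let $\mathcal{M} = \m G \oplus G_+$ be the unique homogeneous maximal ideal of $G = G_I(A)$, so that $t = \operatorname{grade}(\mathcal{M}, G)$. With $k$ infinite, choose a minimal reduction $J = (x_1,\dots,x_\ell)$ of $I$ with $\ell = \ell(I)$. Then $J I^{r} = I^{r+1}$ for some $r$, and hence $G_+ \subseteq \sqrt{\m G + (x_1^*,\dots,x_\ell^*)}$, where $x_i^*$ are the initial forms in $G_1$. Indeed, modulo $\m G$ we are in $F(I)$, and there $(x_1^\circ,\dots,x_\ell^\circ)$ is primary to the irrelevant ideal. Consequently $\mathcal{M} = \sqrt{\m G + (x_1^*,\dots,x_\ell^*)}$.

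The next step uses the standard fact that for an ideal $\mathfrak{a}$ and an element $x$ of a Noetherian ring, $\operatorname{grade}(\mathfrak{a} + (x), G) \leq \operatorname{grade}(\mathfrak{a}, G) + 1$. This can be seen via Koszul homology or Ext, and grade depends only on the radical. Applying it $\ell$ times gives
\[
t = \operatorname{grade}(\mathcal{M}, G) \leq \operatorname{grade}(\m G, G) + \ell(I).
\]
So $\operatorname{grade}(\m G, G) \geq s := t - \ell(I)$; if $s \leq 0$ there is nothing to prove. The ideal $\m G$ is homogeneous and generated in degree $0$ by the images of generators of $\m$. Because $k$ is infinite, graded prime avoidance applied to the finitely many associated primes at each stage lets me pick a $G$-regular sequence $y_1^*,\dots,y_s^*$ of degree-zero elements. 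Each $y_i^*$ is a general $k$-linear combination of the generators, that is, the image in $A/I$ of some $y_i \in \m$.

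Finally I transfer this to $A/I^n$. The $y_i^*$ have degree $0$, so $y_1,\dots,y_s$ is a regular sequence on every graded piece $I^n/I^{n+1}$. This needs a little care with quotients: $G/(y_1^*,\dots,y_j^*)G$ has $n$-th component $I^n/(I^{n+1} + (y_1,\dots,y_j)I^n)$, which is $(I^n/I^{n+1})/(y_1,\dots,y_j)(I^n/I^{n+1})$. Now induct on $n$ using the exact sequences
\[
0 \rt I^n/I^{n+1} \rt A/I^{n+1} \rt A/I^n \rt 0.
\]
A sequence that is regular on both outer modules is regular on the middle one, by the snake lemma applied one element at a time, after checking that the quotient sequences remain exact. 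Hence $y_1,\dots,y_s$ is regular on $A/I^n$ for all $n$, and $\depth A/I^n \geq s = t - \ell(I)$.

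The step I expect to need the most care is the grade inequality $\operatorname{grade}(\m G) \geq t - \ell$ together with the choice of degree-zero regular elements. I must check that the grade of a homogeneous ideal in this graded, non-local ring $G$ is realised by a homogeneous sequence drawn from its degree-zero generators. This uses that all associated primes of the successive quotients are homogeneous and that $k$ is infinite.
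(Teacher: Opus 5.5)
Your argument is correct, and it takes a genuinely different route from the paper's.

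\textbf{The paper's route.} The paper works homologically. It completes $A$, chooses a regular local ring $Q$ mapping onto $A$, and regards $L^I=\bigoplus_{n\ge 0}A/I^{n+1}$ as a non-finitely generated graded module over $\mathcal{S}=Q[X_1,\dots,X_\ell]$. Using the filtration $L_r$ and the $\varprojlim^1$ exact sequence, it shows $\projdim_{\mathcal S}L^I=\projdim_{\mathcal S}G=q+\ell-t$. Taking degree-$n$ components then gives $\projdim_Q A/I^n\le q+\ell-t$, and Auslander--Buchsbaum over $Q$ finishes.

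\textbf{Your route.} You effectively prove the chain $\min_n\depth A/I^n\ge\operatorname{grade}(\m G,G)\ge t-\ell(I)$.
\begin{itemize}
\item The second inequality holds because $\mathcal M$ is, up to radical, $\m G$ plus $\ell$ elements coming from a minimal reduction.
\item The first holds because degree-zero $G$-regular elements taken from $\m/I$ are regular on every $I^n/I^{n+1}$. Hence, by the snake lemma along $0\to I^n/I^{n+1}\to A/I^{n+1}\to A/I^n\to 0$, they are regular on every $A/I^n$.
\end{itemize}

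\textbf{Comparison.} Your argument is elementary: no completion, no Cohen presentation, no $\varprojlim^1$. It also gives more, namely a single sequence $y_1,\dots,y_{t-\ell}\in\m$ that is regular on all $A/I^n$ simultaneously. It sidesteps the obstacle in the paper's remark (that $G$ need not have a homogeneous system of parameters), because you take regular elements in $\m G$ rather than in $\mathcal M$. In exchange, the paper's method yields the structural fact $\projdim_{\mathcal S}L^I=e$ about the non-finitely generated module $L^I$, which is the point of its technique.

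\textbf{Two small remarks.}
\begin{itemize}
\item The inequality $\operatorname{grade}(\mathfrak a+(x),G)\le\operatorname{grade}(\mathfrak a,G)+1$ requires $(\mathfrak a+(x))G\ne G$. This holds here because every ideal you use lies in $\mathcal M$.
\item An infinite residue field is not needed for the avoidance step. Since $\m G\subseteq\mathfrak p$ if and only if $\m/I\subseteq\mathfrak p\cap G_0$, ordinary prime avoidance in $A/I$ already produces the degree-zero regular elements.
\end{itemize}
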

 We note that our result has content only when $t > \ell(I)$.

\begin{remark}
  We note that $G_I(A)$ need not have a homogeneous system of parameters. This makes it impossible to prove Theorem \ref{bella} by induction on depth of $G_I(A)$.
\end{remark}

As an immediate corollary we obtain:
\begin{corollary}
  \label{cor}(with hypotheses as in \ref{bella}). We have 
$$ \depth G_I(A) \leq \ell(I) + \min_{m \geq 1} \{\depth A/I^n \}.$$
\end{corollary}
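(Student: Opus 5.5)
The plan is to obtain Corollary \ref{cor} directly from Theorem \ref{bella}, with no new local cohomology computation. Put $t = \depth G_I(A)$. Theorem \ref{bella} gives $\depth A/I^n \geq t - \ell(I)$ for every $n \geq 1$. Taking the minimum over $n$ then yields
\[
\min_{n\geq 1}\{\depth A/I^n\} \geq t - \ell(I),
\]
and rearranging gives $t \leq \ell(I) + \min_{n\geq 1}\{\depth A/I^n\}$, which is the asserted inequality. The index written as $m$ in the statement is just the running index $n$.

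Two small points need checking.

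First, the minimum has to exist and be finite. Each $A/I^n$ is a nonzero finitely generated $A$-module, because $I \subseteq \m$ gives $I^n \neq A$. Its depth is therefore a nonnegative integer bounded by $\dim A$. A nonempty set of nonnegative integers has a minimum, so the minimum is attained.

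Second, $t$ should be finite. Since $G_I(A)/G_I(A)_+ \cong A/I$ and $A/I \neq 0$, the module $G_I(A)$ is not annihilated by its relevant homogeneous maximal ideal. Hence its depth is a well-defined integer bounded by $\dim G_I(A)$.

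There is no real obstacle here: all the substance lies in Theorem \ref{bella}, and the corollary is the pointwise bound minimized over $n$.
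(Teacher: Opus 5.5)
Your proposal is correct and matches the paper's argument: the paper presents Corollary \ref{cor} as an immediate consequence of Theorem \ref{bella}, obtained by minimizing the pointwise bound $\depth A/I^n \geq t - \ell(I)$ over $n \geq 1$. One wording fix in your side remark: $t$ is finite because $\mathfrak{M}G_I(A) \neq G_I(A)$ for the homogeneous maximal ideal $\mathfrak{M}$ (since $G_I(A)/\mathfrak{M}G_I(A) \cong A/\m \neq 0$), not because $\mathfrak{M}$ fails to annihilate $G_I(A)$.
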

 \s\label{setup} \emph{Technique used to prove our result:} \\
 Set $L^I = \bigoplus_{n \geq 0}A/I^{n+1}$. In \cite[4.2]{P1} it is proved that $L^I$ is an $\R(I)$-module. Note that $L^I$ is not finitely generated as a $\R(I)$-module.
 Also note that we may assume that $A$ is complete with an infinite residue field. Set $\ell = \ell(I)$. Let $J = (x_1, \ldots, x_\ell)$ be a minimal reduction of $I$. Then $\R(I)$ is a finite $A[Jt]$-module.
We have a surjection $T = A[X_1, \ldots, X_{\ell}] \rt A[Jt]$ where $X_i$ is mapped to $x_it$. As $A$ is complete, we can choose a regular local ring $Q$ (say of dimension $q$) mapping surjectively to $A$. Set $\Sc = Q[X_1, \ldots, X_{\ell}]$. We have an obvious surjection $\Sc \rt T$. So $\R(I)$ is a finitely generated $\Sc$-module, We note that $\Sc$ is a $^*$-local regular ring of dimension $q + \ell$. So by Auslander-Buchsbaum theorem we have $\projdim_{\Sc} G_I(A) = q + \ell - t = e$. We prove
\begin{theorem}\label{rachel}
(with setup as in \ref{setup}). We have $\projdim_{\Sc} L^I = e$.
\end{theorem}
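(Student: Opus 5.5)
The plan is to compare $L^I$ with $G_I(A)$ through the exact sequence of $\R(I)$-modules (hence of graded $\Sc$-modules)
\[
0 \rt G_I(A) \rt L^I \rt L^I(-1) \rt 0 .
\]
Its degree $n$ piece is $0 \rt I^n/I^{n+1} \rt A/I^{n+1} \rt A/I^n \rt 0$. I expect this sequence to be available from \cite{P1}; otherwise I will check directly that these degreewise maps are compatible with multiplication by $at$ for $a \in I$.

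Since $L^I$ is not finitely generated, I first need the right notion of minimal resolution. $L^I$ is graded and concentrated in degrees $\geq 0$. Each $L^I_n = A/I^{n+1}$ is a finitely generated $Q$-module, and each $\Sc_n$ is a finite free $Q$-module. Let $\mathfrak{N}$ be the $^*$-maximal ideal of $\Sc$ and $k = \Sc/\mathfrak{N}$. I would build a graded free resolution $F_\bullet \rt L^I$ one degree at a time. In degree $j$, choose a minimal set of generators, via Nakayama over the local ring $Q$, of the degree-$j$ part of the current module modulo the submodule generated in lower degrees. This gives a minimal resolution with $F_i \otimes k \cong \Tor_i^{\Sc}(L^I,k)$, each $F_i$ being a direct sum of copies of $\Sc(-a)$ with $a \geq 0$; the grading is bounded below, so the construction terminates in each degree. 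Graded Nakayama, applied degreewise starting from the lowest degree, then shows that $\projdim_{\Sc} L^I = \sup\{ i : \Tor_i^{\Sc}(L^I,k) \neq 0\}$. The same holds for $G_I(A)$, where this supremum equals $e$ by Auslander--Buchsbaum.

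\emph{Upper bound.} Take the long exact sequence of $\Tor^{\Sc}(-,k)$ in a fixed degree $j$. For $i > e$ we have $\Tor_i(G_I(A),k)=0$, so the sequence gives injections
\[
\Tor_i(L^I,k)_j \hookrightarrow \Tor_i(L^I(-1),k)_j = \Tor_i(L^I,k)_{j-1}.
\]
The minimal resolution lives in degrees $\geq 0$, so $\Tor_i(L^I,k)_j = 0$ for $j<0$. Induction on $j$ then gives $\Tor_i(L^I,k)=0$ for all $i>e$, and hence $\projdim L^I \leq e$.

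\emph{Lower bound.} Since $\Tor_{e+1}(L^I(-1),k)=0$ by the upper bound, the long exact sequence gives an injection $\Tor_e(G_I(A),k) \hookrightarrow \Tor_e(L^I,k)$. The left side is nonzero, so $\projdim L^I \geq e$.

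The main obstacle is the foundational step: justifying that, for this non-finitely-generated but bounded-below module with finite-length-over-$Q$ graded pieces, projective dimension is detected by $\Tor_i(-,k)$. This needs care because $Q$ is local but not a field, so I must use Nakayama over $Q$ in each degree together with induction on degree, rather than the usual finitely generated graded Nakayama lemma.
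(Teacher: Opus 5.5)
Your proof is correct. Your lower bound is essentially the paper's argument, but your upper bound takes a genuinely different route.

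\textbf{The paper's route.} It filters $L^I$ by the finitely generated submodules $L_r$, with $L_r/L_{r-1}\cong G(-r)$. It then uses the Milnor sequence $0\to \ilim^1 \Ext^{v-1}_\Sc(L_r,M)\to \Ext^v_\Sc(L,M)\to \ilim \Ext^v_\Sc(L_r,M)\to 0$ to get $\Ext^{e+1}_\Sc(L,M)=0$ for every graded $M$. This is formal: it uses only that $L$ is an increasing union with controlled subquotients, and never the $Q$-structure of the graded pieces. But it produces only a graded projective resolution, so the paper then needs Kaplansky's theorem to see that the degree strands are $Q$-free.

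\textbf{Your route.} You exploit that $L^I$ is bounded below with finitely generated $Q$-components. Hence it has a minimal graded free resolution with finitely many generators in each degree, and its length is detected by $\Tor^\Sc_\ast(-,k)$. The single sequence $0\to G\to L\to L(-1)\to 0$ then gives injections $\Tor_i(L,k)_j\hookrightarrow \Tor_i(L,k)_{j-1}$ for $i>e$. Induction on the internal degree, starting from vanishing in negative degrees, kills $\Tor_{>e}$, with no filtration and no $\ilim^1$. The price is the foundational lemma on minimal resolutions of such non-finitely-generated modules, which you sketch correctly via Nakayama over $Q$ degree by degree.

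\textbf{What your route buys.} The degree-$n$ strand of your resolution is directly a finite free resolution of $A/I^{n+1}$ over $Q$ of length at most $e$. That is exactly what the proof of Theorem~\ref{bella} uses, so Kaplansky's theorem is not needed.

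\textbf{One slip.} In your last paragraph you say the graded pieces have finite length over $Q$. In general $A/I^{n+1}$ does not have finite length unless $I$ is $\m$-primary. Your argument only uses finite generation over $Q$, which you state correctly earlier, so nothing breaks.
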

We note that if $E = \bigoplus_{n\in \Z}E_n$ is a graded projective $\Sc$-module (not necessarily finitely generated) then $E_n$ is a free $Q$-module for all $n \in \Z$ (use the fact that over local rings, all projective modules (not-necessarily finitely generated) are free, see\cite[2.5]{Mat}).

We give a proof of Theorem \ref{bella} assuming Theorem \ref{rachel}.
\begin{proof}
Let $\Pb$ be a graded, $\Sc$-projective resolution of $L^I$ of length $e$. Taking the degree $n^{th}$-component of this resolution we get a free resolution (not necessarily minimal) of $A/I^n$ as a $Q$-module.
So $\projdim_Q A/I^n \leq e$ for all $n \geq 1$. From this again by Auslander-Buchsbaum formula we get $\depth_Q A/I^n \geq t - \ell(I)$ for all $n\geq 1$. The result follows as $\depth_A A/I^n =
\depth_Q A/I^n$ (this is an exercise problem in \cite[1.2.26]{BH}).
\end{proof}
\begin{remark}
  The proof of Theorem \ref{rachel}, although short,  uses the functor $\ilim^1$ and so is difficult to guess.
\end{remark}
We now describe in brief the contents of this paper. In section two we discuss some preliminaries  that we need. In section three we prove Theorem \ref{rachel} which implies our result.

\section{Preliminaries}
We discuss a few preliminary results that we need.
Throughout $(A,\mathfrak{m})$ is a Noetherian local ring of dimension $d$ and $I\subseteq \mathfrak{m}$ is an ideal in $A$. Also $\R = \R(I)$ is the Rees algebra of $I$.
Set $G = G_I(A)$.

\s \label{l}(see \cite{P1}) Let $L = L^I = \bigoplus_{n \geq 0}A/I^{n+1}$. We have an inclusion of $\R$-algebras $\R \xrightarrow{i} A[It]$. We note that $\coker(i) = L(-1)$. So $L(-1)$ (and hence $L$) is an $\R$-module. Note that $L$ is \emph{not} finitely generated as an $\R$-module.
The exact sequence of $A$-modules $0 \rt I^n/I^{n+1} \rt A/I^{n+1} \rt A/I^n \rt 0$ can be assembled to get an exact sequence of $\R$-modules
$$ 0 \rt G \rt L \rt L(-1) \rt 0.$$

\s  For $r \geq 0$ consider the finitely generated $\R$
submodules $L_r $ of $L$ defined as follows:
\[
L_r = \left<\bigoplus_{n =0}^{r} \frac{A}{I^{n+1}} \right>.
\]
We have
\[
L_r = \bigoplus_{n =0}^{r} \frac{A}{I^{n+1}} \bigoplus \left( \bigoplus_{n \geq r +1}\frac{I^{n-r}}{I^{n+1}} \right).
\]
Notice that $L_0 = G$.  We note that $L_r \subseteq L_{r+1}$ for all $r \geq 0$. Notice
\[
L = \bigcup_{r \geq 0} L_r.
\]
For systemic reasons set $L_{-1} = 0$.
 For all $r \geq 0$ we have an exact sequence
\begin{equation}\label{Lr-eq}
  0 \rightarrow L_{r-1} \rightarrow L_r \xrightarrow{\rho_r} G(-r) \rightarrow 0.
\end{equation}
This filtration was introduced in \cite{P2}.
\section{Proof of Theorem \ref{rachel}}
In this section we prove Theorem \ref{rachel}. Set $G = G_I(A)$ and  $L = L^I$.
\begin{proof}
We first prove that $\projdim_{\Sc} L \leq e$.
It suffices to prove that if $M$ is a graded $\Sc$-module then $\Ext^{e+1}_\Sc(L, M) = 0$. As $\{L_r\}_{r\geq 0}$ is an ascending sequence of graded $\Sc$-submodules of $L$ whose union is $L$, we have an exact sequence for all $v\geq 0$,
\begin{equation*}
  0 \rt \limone_{\longleftarrow} \ \Ext^{v-1}_\Sc(L_r, M) \rt \Ext^{v}_\Sc(L, M) \rt \ilim \Ext^{v}_\Sc(L_r, M) \rt 0, \tag{*}
\end{equation*}
 see \cite[3.5.10]{W}.
For $r\ge 1$, by (\ref{Lr-eq}),  we have an exact sequence of $\Sc$-modules
$$ 0 \rt L_{r-1} \rt L_r \rt G(-r) \rt 0. $$
We apply the functor $\Hom(-, M)$ to the above exact sequence and look at the long exact sequence in cohomology. As $\projdim_{\Sc} G = e$ we obtain surjections
$\Ext^e_\Sc(L_r, M) \rt \Ext^e_\Sc(L_{r-1}, M)$ for all $r\geq 1$ and isomorphisms $\Ext^{e+1}_\Sc(L_r, M) \cong \Ext^{e + 1}_\Sc(L_{r-1}, M)$ for all $r \geq 1$.

The maps $\Ext^e_\Sc(L_r, M) \rt \Ext^e_\Sc(L_{r-1}, M)$ are surjections for $r \geq 1$. So we obtain $\ilim^1 \Ext^{e}_\Sc(L_r, M) = 0$, see \cite[3.5.3]{W}.

 For all $r \geq 1$, we have isomorphisms $\Ext^{e+1}_\Sc(L_r, M) \cong \Ext^{e + 1}_\Sc(L_{r-1}, M)$. So  we get for all $r\geq 1$
$$\Ext^{e+1}_\Sc(L_r, M) \cong \Ext^{e + 1}_\Sc(L_{0}, M) = \Ext^{e + 1}_\Sc(G, M) = 0.$$

 The assertion now follows from (*). Thus $\projdim_{\Sc} L \leq e$.

 Suppose if possible $\projdim_{\Sc} L <  e$. We have an exact sequence of $\Sc$-modules
 $$0 \rt G \rt L \rt L(-1) \rt 0, $$
 see \ref{l}.
 Let $k$ be the residue field of $Q$. We apply the functor $-\otimes_{\Sc} k$  to this exact sequence and consider the long exact  sequence in homology. We obtain $\Tor^{\Sc}_e(k, G) = 0$.
 So $\projdim_{\Sc}G < e$ which is a contradiction. Thus  $\projdim_{\Sc} L =  e$.
\end{proof}

\end{document}